\definecolor{reasonablegreen}{rgb}{0,0.5,0}
\newcommand{\transpose}[1]{{#1}^{\top}}
\newcommand{\val}[1]{\mathsf{val}(#1)}
\newcommand{\B}[1]{\textbf{#1}}
\newcommand{\centroid}[1]{\mathsf{centroid}(#1)}
\newcommand{\gab}[1]{{\color{black}#1}}
\newcommand{\gabjulrev}[1]{{\color{black}#1}}
\newcommand{\leorev}[1]{{\color{black}#1}}
\newtheorem{result}{\ }[section]
\theoremstyle{changebreak}                
\newtheorem{theorem}[result]{Theorem}
\newtheorem{definition}[result]{Definition}
\newtheorem{lemma}[result]{Lemma}
\newtheorem{corollary}[result]{Corollary}
\newtheorem{proposition}[result]{Proposition}
\newtheorem{remark}[result]{Remark}
\begin{document}

\thispagestyle{empty} 

\noindent\fbox{%
    \parbox{\textwidth}{%
        This paper has been published online and is downloadable at: \url{https://ojmo.centre-mersenne.org/articles/10.5802/ojmo.18/}. Please visit the publisher's website.
    }%
}

\begin{center}
 {\Large Cycle-based formulations in Distance Geometry}
\end{center}

\vspace{7mm}

\noindent\textbf{Leo Liberti}\hfill\href{mailto:liberti@lix.polytechnique.fr}{\ttfamily liberti@lix.polytechnique.fr}\\
\emph{\small CNRS LIX, \'Ecole Polytechnique, F-91128 \\Palaiseau, France}\\
\\
\textbf{Gabriele Iommazzo}\hfill\href{mailto:iommazzo@zib.de}{\ttfamily iommazzo@zib.de}\\
\emph{\small Zuse Institute Berlin\\
Berlin, Germany}\\
\\
\textbf{Carlile Lavor}\hfill\href{mailto:clavor@ime.unicamp.br}{\ttfamily clavor@ime.unicamp.br}\\
\emph{\small IMECC, University of Campinas\\Brazil}\\
\\
\textbf{Nelson Maculan}\hfill\href{mailto:maculan@cos.ufrj.br}{\ttfamily maculan@cos.ufrj.br}\\
\emph{\small COPPE, Federal Univ.~Rio de Janeiro (UFRJ)\\Brazil}\\
\\

\vspace{5mm}

\begin{center}
\begin{minipage}{0.85\textwidth}
\begin{center}
 \textbf{Abstract}
\end{center}
{\small The distance geometry problem asks to find a realization of a given simple edge-weighted graph in a Euclidean space of given dimension $K$, where the edges are realized as straight segments of lengths equal (or as close as possible) to the edge weights. The problem is often modelled as a mathematical programming formulation involving decision variables that determine the position of the vertices in the given Euclidean space. Solution algorithms are generally constructed using local or global nonlinear optimization techniques. We present a new modelling technique for this problem where, instead of deciding vertex positions, the formulations decide the length of the segments representing the edges in each cycle in the graph, projected in every dimension. We propose an exact formulation and a relaxation based on a Eulerian cycle. We then compare computational results from protein conformation instances obtained with stochastic global optimization techniques on the new cycle-based formulation and on the existing edge-based formulation. While edge-based formulations take less time to reach termination, cycle-based formulations are generally better on solution quality measures.
}
\end{minipage}
\end{center}

\vspace{0mm}

\section{Introduction}
\label{s:intro}
We consider the fundamental problem in Distance Geometry (DG):
\begin{quote}
\textsf{Distance Geometry Problem} (DGP). Given a positive integer $K$ and a simple undirected graph $G=(V,E)$ with an edge weight function $d:E\to\mathbb{R}_{\ge 0}$, establish whether there exists a {\it realization} $x:V\to\mathbb{R}^K$ of the vertices such that Eq.~\eqref{dgp} below is satisfied:
\begin{equation}
  \forall \{i,j\}\in E\qquad \|x_i-x_j\|=d_{ij},\label{dgp}
\end{equation}
where $x_i\in\mathbb{R}^K$ for each $i\in V$ and $d_{ij}$ is the weight on edge $\{i,j\}\in E$.
\end{quote}
Although the DGP is given above in the canonical decision form, we consider the corresponding search problem, where one has to actually find the realization $x$. The DGP is also known as the {\it graph realization problem} in geometric rigidity \cite{laurent97,berg,eren04}. It belongs to a more general class of metric completion and embedding problems \cite{kopperman,hoffman,schaeffer2}.

In its most general form, the DGP might be parametrized over any norm \cite{oneinfnorm-lncs}. In practice, the $\ell_2$ norm is the most usual choice \cite{dgp-sirev}, and will also be employed in this paper. The DGP with the $\ell_2$ norm is sometimes called the {\sc Euclidean DGP} (EDGP). For the EDGP, Eq.~\eqref{dgp} is often reformulated to:
\begin{equation}
  \forall \{i,j\}\in E\qquad \|x_i-x_j\|^2_2=d_{ij}^2,\label{edgp}
\end{equation}
which is a system of quadratic polynomial equations with no linear terms \cite[\S 2.4]{dgbook}.

The EDGP is motivated by many scientific and technological applications. The clock synchronization problem, for example, aims at establishing the absolute time of a set of clocks when only the time difference between subsets of clocks can be exchanged \cite{singer4}. The sensor network localization problem aims at finding the positions of moving wireless sensor on a 2D manifold given an estimation of some of the pairwise Euclidean distances \cite{eren04,eren06,snl}. The {\sc Molecular DGP} (MDGP) aims at finding the positions of atoms in a protein, given some of the pairwise Euclidean distances \cite{dgp-sirev,dgbook}. The position of autonomous underwater vehicles cannot be determined via GPS (since the GPS signal does not reach under water), but must rely on distances estimated using sonars: a DGP can then be solved in order to localize the fleet \cite{bahr}. Applications of the DGP to data science are described in \cite{dgds}; see \cite{dg4wv} for an application to natural language processing. In general, the DGP is an inverse problem that occurs every time one can measure some of the pairwise distances in a set of entities, and needs to establish their position.


The DGP is weakly {\bf NP}-hard even when restricted to simple cycle graphs (by reduction from {\sc Partition}) and strongly {\bf NP}-hard even when restricted to integer edge weights in $\{1,2\}$ in general graphs (by reduction from {\sc 3sat}) \cite{saxe79}. It is in {\bf NP} if $K=1$ but not known to be in {\bf NP} if $K>1$ for general graphs \cite{dgpinnp}, which is an interesting open question \cite{openprobsDG}.

There are many approaches to solving the DGP. Generally speaking, application-specific solution algorithms exploit some of the graph structure, whenever it is induced by the application. For example, a condition often asked when reconstructing the positions of sensor networks is that the realization should be unique (as one would not know how to choose between multiple realizations), a condition called {\it global rigidity} \cite{connelly}. This condition can, at least generically, be ensured by a specific graph rigidity structure of the unweighted input graph, as shown in \cite{gortler}. For protein structures, on the other hand, which are found in nature in several isomers, one is sometimes interested in finding all (incongruent) realizations of the given protein graph \cite{dmdgp,symmBPjbcb,liberti-gsi13}. Since such graphs are rigid, one can devise an algorithm (called Branch-and-Prune) that, following a given vertex order, branches on reflections of the position of the next vertex, which is computed using trilateration \cite{dgbook}. It is also possible that DGP problems arise in their full generality, i.e.~independently of any further knowledge on their structure or properties: for such cases, one can resort to Mathematical Programming (MP) formulations and corresponding solvers \cite{mdgpsurvey,zoo,isco16}.

The MP formulation that is most often used reformulates Eq.~\eqref{edgp} to the minimization of the sum of squared error terms:
\begin{equation}
  \min_x \sum\limits_{\{i,j\}\in E} (\|x_i-x_j\|_2^2 - d_{ij}^2)^2.  \label{go}
\end{equation}
This formulation describes an unconstrained polynomial minimization problem. The polynomial in question has degree 4, is always nonnegative, and generally nonconvex and multimodal. The decision variables are represented by a $n\times K$ rectangular matrix $x$ such that $x_{ik}$ is the $k$-th component of the vector $x_i$, which gives the position in $\mathbb{R}^K$ of vertex $i\in V$. Each solution $x^\ast\in\mathbb{R}^{nK}$ having global minimum value equal to zero is a realization of the given graph. Solutions with small objective function value represent approximate solutions. Because of the nonconvexity of the formulation and the hardness of the problem, Eq.~\eqref{go} is not usually solved to guaranteed $\varepsilon$-optimality (e.g.~using a spatial Branch-and-Bound approach \cite{couenne}); rather, heuristic approaches, such as MultiStart (MS) \cite{lln1}, Variable Neighbourhood Search (VNS) \cite{dvnsjogo}, or relaxation-based heuristics \cite{isco16,barvinok_orl} may be used.

As far as we know, all existing MP formulations for the EDGP are edge-based, such as the one in Eq.~\eqref{go}. In this paper we discuss a new MP formulation for the EDGP based on the incidence of cycles and edges instead, a relaxation based on Eulerian cycles, and a computational comparison with Eq.~\eqref{go}.

\leorev{Although this paper is not about graph theory, a fair amount of graph theoretical content is needed to prove the main reformulation result. Since the OJMO readership is supposed to be well versed in optimization but not necessarily in graph theory, we strove to achieve clarity and self-containment at the expense of compactness.}

\section{Some existing MP formulations}
\label{s:existing}
In this short section we give a minimal list of typical variants of Eq.~\eqref{go} in order to motivate the claim that the cycle-based formulation of the DGP discussed in this paper is new. Of course, only a complete enumeration of DGP formulations in the literature could substantiate this claim. But even this short list shows that the typical modelling approach for the DGP is direct: namely, decision variables encode the realization of each vertex as a vector in $\mathbb{R}^K$. Many more formulations of the DGP and its variants, all corresponding to this criterion, are given in \cite{lln1,mdgpsurvey,zoo}.

The closest variant of Eq.~\eqref{go} simply adds a constraint ensuring that the centroid of all of the points in the realization is at the origin \leorev{(see Lemma~\ref{lem:centroid} below)}. This removes the degrees of freedom given by translations:
\begin{equation}
  \left.\begin{array}{rl}
    \min\limits_x & \sum\limits_{\{i,j\}\in E} (\|x_i-x_j\|_2^2 - d_{ij}^2)^2 \\
    \forall k\le K & \sum\limits_{i\in V} x_{ik} = 0.
  \end{array}\right\} \label{go1}
\end{equation}
This formulation describes a linearly constrained polynomial minimization problem. Like Eq.~\eqref{go}, the polynomial in Eq.~\eqref{go1} has degree 4, is always nonnegative, and is generally nonconvex and multimodal.

Another small variant of Eq.~\eqref{go1} is achieved by adding range bounds to the the realization variables $x$; generally valid (but slack) bound values can be set to $\pm\frac{1}{2}\sum_{\{i,j\}\in E} d_{ij}$. This corresponds to the worst case of a single path being arranged in a straight line with unknown orientation.

Another possible formulation, derived again from Eq.~\eqref{go}, is obtained by replacing the squared error with absolute value errors (whose positive and negative parts are encoded by $s^+,s^-$). This yields the following formulation:
\begin{equation}
  \left.\begin{array}{rl}
    \min\limits_{s,x} & \sum\limits_{\{i,j\}\in E} (s^+_{ij} + s^-_{ij}) \\
    \forall \{i,j\}\in E & \|x_i-x_j\|_2^2 = d_{ij}^2 + s^+_{ij} - s^-_{ij}\\
    \forall \{i,j\}\in E & s^+_{ij}, s^-_{ij} \ge 0.
  \end{array}\right\}
  \label{minabs}
\end{equation}
Note that, again, each solution $s^\ast,x^\ast$ with zero optimal objective value makes $x^\ast$ an encoding of a realization of the given graph. Thus, global optima are preserved by this reformulation, while local optima may differ.

Yet another reformulation derived from replacing squared errors with absolute values consists in observing that the ``plus'' and ``minus'' parts of each absolute value term correspond to a convex and concave function. This yields a formulation called {\it push-and-pull}, since the objective pulls adjacent vertices apart, while the constraint push them back together:
\begin{equation}
  \left.\begin{array}{rrcl}
    \max\limits_x & \sum\limits_{\{i,j\}\in E} \|x_i-x_j\|_2^2 && \\
    \forall \{i,j\}\in E & \|x_i-x_j\|_2^2 &\le& d_{ij}^2.
  \end{array}\right\}
  \label{pushpull}
\end{equation}
Eq.~\eqref{pushpull} is a Quadratically Constrained Quadratic Program with concave objective and convex constraints. It was used within a Multiplicative Weights Update algorithm for the DGP in \cite{zoo}, as well as a basis for Semidefinite Programming and Diagonally Dominant Programming relaxations \cite{isco16,barvinok_orl}. It can be shown that all constraints are active at global optima, which therefore correspond to realizations of the given graph \cite{mwu}.

\section{A new formulation based on cycles}
\label{newform}
In this section we propose a new formulation for the EDGP, based on the fact that the quantities $x_{ik}-x_{jk}$ sum up to zero over all edges of any cycle in the given graph for each dimensional index $k\le K$. This idea was used in \cite{saxe79} for proving weak {\bf NP}-hardness of the DGP on cycle graphs. For a subgraph $H$ of a graph $G=(V,E)$, we use $V(H)$ and $E(H)$ to denote vertex and edge set of $H$ explicitly; given a set $F$ of edges we use $V(F)$ to denote the set of incident vertices. Let $m=|E|$ and $n=|V|$. For a mapping $x:V\to\mathbb{R}^K$ we denote by $x[U]$ the restriction of $x$ to a subset $U\subseteq V$. Furthermore, we let a {\it closed trail} be a sequence of vertices and of the edges joining them, which begins and ends at the same vertex, and is such that no edge is repeated.
\begin{lemma}
  \label{lemcycle}
  Given an integer $K>0$, a simple undirected weighted graph $G=(V,E,d)$ and a mapping $x:V\to\mathbb{R}^K$, then for each cycle $C$ in $G$, each orientation of the edges in $C$ given by a closed trail $W(C)$ in the cycle, and each $k\le K$ we have:
  \begin{equation}
    \sum\limits_{(i,j)\in W(C)} (x_{ik}-x_{jk})=0. \label{sumcycles}
  \end{equation}
\end{lemma}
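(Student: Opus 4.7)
The plan is to exploit the fact that a closed trail around the cycle $C$ gives rise to a telescoping sum. Since $W(C)$ is a closed trail visiting every edge of $C$ exactly once, I can write it as an ordered sequence of vertices $v_0,v_1,\ldots,v_{\ell-1},v_\ell=v_0$ where $\ell=|E(C)|$ and each consecutive pair $(v_t,v_{t+1})$ is an oriented edge of $W(C)$. The sum in Eq.~\eqref{sumcycles} then takes the form $\sum_{t=0}^{\ell-1}(x_{v_t k}-x_{v_{t+1} k})$.

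Next I would observe that this sum telescopes: after expanding, every term $x_{v_t k}$ for $1\le t\le \ell-1$ appears once with a positive sign (coming from edge $(v_t,v_{t+1})$) and once with a negative sign (coming from edge $(v_{t-1},v_t)$), so all intermediate terms cancel. What remains is $x_{v_0 k}-x_{v_\ell k}$, which is zero because $v_\ell=v_0$ by the closedness of the trail. This argument works dimension by dimension, hence holds for every $k\le K$.

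There is essentially no obstacle here beyond ensuring that the closed trail representation of $W(C)$ is meaningful. A cycle in the graph-theoretic sense admits a closed trail traversal (in fact, an Eulerian closed walk of the cycle, which in the case of a simple cycle is just the cycle traversed once in one of its two orientations). The lemma does not depend on the specific orientation chosen: reversing the orientation flips every summand's sign, preserving the value zero. Thus the statement is independent of the particular closed trail $W(C)$ used to orient the edges, and the proof reduces to the telescoping observation.
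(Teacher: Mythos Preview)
Your proof is correct and follows essentially the same telescoping argument as the paper: the paper renumbers the vertices along the closed trail and writes the sum out explicitly to exhibit the cancellation, which is exactly what you do with your sequence $v_0,\ldots,v_\ell=v_0$. Your additional remark on orientation independence is not in the paper's proof but is harmless.
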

\begin{proof}
We renumber the vertices in $V(C)$ to $1,2,\ldots,\gamma=|V(C)|$ following the walk order in $W(C)$. Then Eq.~\eqref{sumcycles} can be explicitly written as:
  \begin{eqnarray*}
   (x_{1k}-x_{2k}) + (x_{2k}-x_{3k}) + \cdots + (x_{\gamma k}-x_{1k}) &=& \\
  =  x_{1k} - (x_{2k}-x_{2k}) - \cdots - (x_{\gamma k}-x_{\gamma k}) - x_{1k} &=& 0,
  \end{eqnarray*}
  as claimed. 
\end{proof}

We introduce new decision variables $y_{ijk}$ replacing the terms $x_{ik}-x_{jk}$ for each $\{i,j\}\in E$ and $k\le K$. Eq.~\eqref{edgp} then becomes:
\begin{equation}
  \forall\{i,j\}\in E\qquad \sum\limits_{k\le K} y_{ijk}^2 = d_{ij}^2. \label{newsys}
\end{equation}
We note that, with a slight abuse of notation, we index the sum in Eq.~\eqref{newsys} with the shorthand $k \le K$ instead of $k \in \{1, 2, \dots, K\}$. We will keep this notation throughout the paper, for ease of reading.
Moreover, we remark that for the DGP with other norms this constraint changes. For the $\ell_1$ or $\ell_\infty$ norms, for example, we would have:
\begin{equation}
  \forall\{i,j\}\in E\quad \sum\limits_{k\le K} |y_{ijk}| = d_{ij}\quad\mbox{ or }\quad\max\limits_{k\le K} |y_{ijk}| = d_{ij}. \label{linnorms}
\end{equation}

Next, we adjoin the constraints on cycles:
\begin{equation}
  \forall k\le K, \leorev{C\subseteq E}\quad \bigg(C\mbox{ is a cycle}\Rightarrow \sum\limits_{\{i,j\}\in E(C)} y_{ijk} = 0\bigg). \label{newcon}
\end{equation}

We also note that the feasible value of a $y_{ijk}$ variable is the (oriented) length of the segment representing the edge $\{i,j\}$ projected on the $k$-th coordinate. We can therefore infer bounds for $y$ as follows:
\begin{equation}
\forall k\le K, \{i,j\}\in E \quad -d_{ij} \le y_{ijk}\le d_{ij}.\label{ybnd}
\end{equation}
Although Eq.~\eqref{ybnd} are not necessary to solve the cycle formulation, they may improve performance of spatial Branch-and-Bound (sBB) algorithms \cite{tawarmalani1,couenne} and of various ``matheuristics'' \cite{recipeconf} that need explicit bounds on all variables, as well as allow an exact linearization of variable products, should a $y$ variable occur in a product with a binary variable in some DGP variant.

\leorev{We now give the following definition and state our main result, i.e., that Eq.~\eqref{newsys} and \eqref{newcon} are a valid MP formulation for the EDGP.
\begin{definition}
Given a strictly positive $K \in \mathbb{N}$ and a graph $G=(V, E)$, $Y \triangleq \{y \in \mathbb{R}^{Km}\;|\; \eqref{newsys} \land \eqref{newcon}\}$ is the set of vectors satisfying Eq.~\eqref{newsys} and \eqref{newcon}.
\label{def:Y}
\end{definition}
We emphasize that $Y$ depends on the EDGP instance $(K, G)$.
}

\begin{theorem}
\label{mainthm}
\leorev{The set $Y$ is non-empty} if and only if $(K,G)$ is a YES instance of the EDGP.
\end{theorem}

The proof argues by recursion on a graph decomposition of $G$ that a certain linear system related to the cycles of $G$ (see Eq.~\eqref{xysys} below) has a solution in the $x$ variables if and only if the given EDGP instance is YES, as certified by the $y$ variables\footnote{This is not the only way to construct $x$ from $y$: three colleagues, in three separate occasions, have suggested that path lengths (as measured by sums of $y$ variables) can yield valid values for the $x$ variables in each dimension: then, the cycle condition would prove consistency of $x$ and $y$. This is easy enough to explain informally. When we set about formalizing this suggestion, so that it would be clear in all its parts, we realized that the proof would likely be as long as the one we present here.}.

We shall construct our proof by steps. The first step defines a graph decomposition based on the removal of a single vertex. Given a graph $G=(V,E)$ and a subset $U\subset V$, the subgraph $G[U]$ {\it induced} by $U$ is the graph $(U,\{\{u,v\}\in E\;|\;u,v\in U\})$. With a slight abuse of notation we denote the vertices of a graph $G'$ by $V(G')$ and its edges by $E(G')$. We let $\gamma(G)$ be the number of connected components of $G$. A vertex $v$ of $G$ with the property that $\gamma(G[V\smallsetminus\{v\}])>\gamma(G)$ is called a {\it cut vertex}. A graph $G$ is {\it biconnected} if, for any pair $u,v$ of distinct vertices of $G$, there is a simple cycle in $G$ incident to $u$ and $v$. It is not hard to show that biconnectedness is equivalent to connectedness and the absence of cut vertices. \leorev{To see this, we first introduce the concept of ``1-decomposition'', then prove some statements related to it.}
\begin{definition}
\label{1dec}
A {\it $1$-decomposition} of a graph $G=(V,E)$ is a set of subgraphs $G_1,\ldots,G_r$ (where $r\in\mathbb{N}$ with $r\ge 1$) of $G$ such that:
\begin{enumerate}[(a)]
\item $G_i$ is either biconnected or a tree for all $i\le r$;\label{1da}
\item $\bigcup_{i\le r}E(G_i)=E$;\label{1db}
\item for any $i<j\le r$ the intersection $V(G_i)\cap V(G_j)$ is either empty or it consists of a single cut vertex of $G$. \label{1dc}
\end{enumerate}
A $1$-decomposition of $G$ is {\it nontrivial} if $r>1$. A graph $G$ is $1$-decomposable if it has a nontrivial $1$-decomposition.
\end{definition}

\leorev{The $1$-decomposition bears some relationship to the block-cutpoint tree defined by Harary in \cite[p.~36]{harary}. However, subgraphs in the $1$-decomposition may also be trees, which cannot appear in Harary's construction, since every vertex of a tree is a cutpoint by definition. Trees are important because they are easy to realize in $\mathbb{R}^K$. Their realizations can then be paste to the realizations of the other subgraphs by rotations and translations, a fact that is used in the proof of the main theorem. The same would not follow if we were to use Harary's block-cutpoint trees, since they contract blocks to a single vertex. We do, however, invoke \cite[Thm.~3.1]{harary} to state that a connected graph $G=(V,E)$ is $1$-decomposable if and only if it has a cut vertex.}

\begin{lemma}
Let $G$ be $1$-decomposable, with decomposition $\mathcal{G}=\{G_1,\ldots,G_r\}$, and $C$ be a cycle in $G$. Then there is an index $i\le r$ s.t.~$C$ is a subgraph of $G_i$.\label{lem01a2}
\end{lemma}
\begin{proof}
Suppose, to aim at a contradiction, that there are two distinct subgraphs $G_i,G_j$ in $\mathcal{G}$ both incident to the edges of $C$. Then there is a nontrivial path $p$ in $C$, with at least two edges, joining a vertex $u$ in $G_i$ to a vertex $v$ in $G_j$. Therefore, \leorev{by \cite[Thm.~3.1]{harary},} there must be a cut vertex of $G$ on $p$, which implies that there is a cut vertex in $C$, which is impossible, since cycles are biconnected. 
\end{proof}
We note that no biconnected graph $G$ is $1$-decomposable. On the other hand, a tree with $n$ vertices can always be 1-decomposed into $n$ subgraphs.

\begin{proposition}
Any connected component $G=(V,E)$ of a simple graph has a (possibly trivial) $1$-decomposition consisting of biconnected subgraphs and tree subgraphs. \label{lem01d}
\end{proposition}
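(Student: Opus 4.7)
The plan is to induct on $n = |V|$. If $G$ is disconnected, I apply the inductive hypothesis to each connected component and take the union of the resulting decompositions, so I may assume $G$ connected. Graphs on at most two vertices are trees with the trivial decomposition $\{G\}$; for $n \ge 3$ with no cut vertex, Lemma \ref{biconncutv} makes $G$ biconnected, and $\{G\}$ again satisfies Defn.~\ref{1dec}.

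For the remaining case, pick a cut vertex $v$ of $G$ and let $U_1, \ldots, U_s$ (with $s \ge 2$) be the vertex sets of the connected components of $G[V \smallsetminus \{v\}]$. Set $G_i = G[U_i \cup \{v\}]$, which is connected and strictly smaller than $G$, so by induction admits a $1$-decomposition $\mathcal{D}_i$. I would then claim that $\mathcal{D} = \bigcup_{i \le s} \mathcal{D}_i$ is a $1$-decomposition of $G$. Property (a) of Defn.~\ref{1dec} is immediate from induction, and (b) follows because every edge of $G$ either has both endpoints in some $U_i$ or is incident to $v$, and hence lies in some $G_i$.

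The main obstacle is property (c). Intersections between pieces drawn from distinct $\mathcal{D}_i$ and $\mathcal{D}_j$ collapse to $V(G_i) \cap V(G_j) = \{v\}$, a cut vertex of $G$ by construction. Two pieces from a common $\mathcal{D}_i$ can, by induction applied to $G_i$, meet in at most a single cut vertex $w$ of $G_i$, and the delicate point is to upgrade $w$ into a cut vertex of $G$ itself. The argument I have in mind is that if $w \ne v$, then removing $w$ from $G_i$ yields at least two components, at least one of which, say $C$, does not contain $v$; since vertices of $U_i$ can only reach $U_j$ ($j \ne i$) in $G$ through $v$, removing $w$ isolates $C$ from the rest of $G$, so $\gamma(G[V \smallsetminus \{w\}]) > \gamma(G)$ and $w$ is a cut vertex of $G$. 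This is the only place where I must look beyond the local information supplied by the inductive hypothesis; everything else follows by direct unpacking of the definition.
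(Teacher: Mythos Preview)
Your argument is correct. The one spot you flagged as delicate---upgrading a cut vertex $w$ of $G_i$ to a cut vertex of $G$---is handled properly: the component $C$ of $G_i\smallsetminus\{w\}$ avoiding $v$ lies entirely in $U_i$, and since $U_i$ only touches the rest of $G$ through $v$, removing $w$ genuinely separates $C$ from $G$. You also correctly sidestep the $n\le 2$ edge case, where Lemma~\ref{biconncutv} is vacuous or degenerate, by observing those graphs are trees.

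Your route differs from the paper's. The paper inducts on the number of biconnected pieces: it repeatedly peels off a \emph{largest} biconnected subgraph $B$, applies the induction hypothesis to $G[V\smallsetminus V(B)]$, and appends $B$ to the resulting decomposition. You instead induct on $|V|$ and split at a cut vertex, which is the classical block--cut-tree construction. Your approach is arguably more transparent: the induction variable is unambiguous, and property~(c) is verified by a clean case split (same $\mathcal{D}_i$ versus different $\mathcal{D}_i$). The paper's approach has the appeal of directly exhibiting each biconnected block as a single piece of the decomposition, but its induction variable (``number of biconnected subgraphs in a $1$-decomposition'') is defined in terms of the object being constructed, and the passage to $G[V\smallsetminus V(B)]$ quietly drops the bridge edges incident to $V(B)$, which must be recovered when $B$ is glued back in. Both arguments ultimately rediscover the block decomposition; yours does so with fewer loose ends.
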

\begin{proof}
We prove this result by induction on the number $\beta$ of biconnected subgraphs in a $1$-decomposition $\mathcal{C}=\{G_1,\dots,G_r\}$ of $G$ for some $r\in\mathbb{N}$. We first deal with the base case, where $\beta=0$. We claim that $G$ must be a tree: supposing $G$ has a cycle $G'$, as well as biconnectedness of cycles and part (\ref{1dc}) of Defn.~\ref{1dec}, $G'$ must be one of the $G_1,\ldots,G_r$. But then $\beta\geq 1$ against the assumption. Therefore, the trivial $1$-decomposition $\mathcal{C}=\{G\}$ is a valid $1$-decomposition of $G$. We now tackle the induction step. Consider the largest biconnected subgraph $B$ of $G$: then $\tilde{G}=G[V\smallsetminus V(B)]$ has one fewer biconnected components than $G$, so, by induction, $\tilde{G}$ has a $1$-decomposition $\mathcal{D}'=\{G'_1,\ldots,G'_{t-1}\}$ for some $t\in\mathbb{N}$ with $t>1$. We prove that $\mathcal{D}=\mathcal{D}'\cup\{B\}$ is a valid $1$-decomposition of $G$. Condition (\ref{1da}) is verified since $\mathcal{D}'$ is a valid 1-decomposition by induction, and $B$ is biconnected; condition (\ref{1db}) is verified since the union of the graph in $\mathcal{D}$ is $\mathcal{G}$ by construction; for condition (\ref{1dc}), suppose there is $i<t$ s.t.~$|V(G_i) \cap V(B)|\ge 2$: this means there are two distinct vertices $u,v$ in both $V(G_i)$ and $V(B)$. Since $G_i$ is connected, there must be a path $p$ from $u$ to $v$ in $G_i$, hence $G[B\cup V(p)]$ is a biconnected graph larger than $B$. But $B$ was assumed to be largest, so this is not possible, and (\ref{1dc}) holds, which concludes the proof. 
\end{proof}

The second step proves the easier ($\Leftarrow$) direction of Thm.~\ref{mainthm}.
\begin{proposition}
  \label{rightdir}
For any YES instance $(K,G)$ of the EDGP there is a vector \leorev{$y^\ast \in Y$}.
\end{proposition}
\begin{proof}
Assume that $(K,G)$ is a YES instance of the EDGP. Then $G$ has a realization $x^\ast\in\mathbb{R}^{nK}$ in $\mathbb{R}^K$. We define $y_{ijk}^\ast=x^\ast_{ik}-x^\ast_{jk}$ for all $\{i,j\}\in E$ and $k\le K$. Since $x^\ast$ is a realization of $G$, by definition it satisfies Eq.~\eqref{edgp}, and, by substitution, Eq.~\eqref{newsys}. Moreover, any realization of $G$ satisfies Eq.~\eqref{sumcycles} over each cycle by Lemma \ref{lemcycle}. Hence, by replacement, it also satisfies Eq.~\eqref{newcon}. 
\end{proof}

In the third step, we lay the groundwork towards the more difficult ($\Rightarrow$) direction of Thm.~\ref{mainthm}. We proceed by contradiction: we assume that $(K,G)$ is a NO instance of the EDGP, and suppose that \leorev{the set $Y$ for this instance is non-empty}. For every $y\in Y$ we consider the $K$ linear systems
\begin{equation}
\forall \{i,j\}\in E\quad x_{ik}-x_{jk}=y_{ijk}, \label{xysys}
\end{equation}
for each $k\le K$, each with $n$ variables and $m$ equations. We square both sides then sum over $k\le K$ to obtain
\begin{equation}
  \forall \{i,j\}\in E\quad \sum_{k\le K} (x_{ik}-x_{jk})^2= \sum_{k\le K} y_{ijk}^2.\label{xysys1a}
\end{equation}
By Eq.~\eqref{newsys} we have
\begin{equation}
  \sum_{k\le K} y_{ijk}^2 = d_{ij}^2,\label{xysys1b}
\end{equation}
whence follows Eq.~\eqref{edgp}, contradicting the assumption that the EDGP is NO. So we only need to show that there is a solution $x^\ast$ to Eq.~\eqref{xysys} for any given $y\in Y$. To this effect, we shall exploit the $1$-decomposition of $G$ into biconnected graphs and trees derived in Prop.~\ref{lem01d}. First, though, we have to show that Eq.~\eqref{xysys} has a solution if $Y\not=\varnothing$ in the ``base cases'' of the $1$-decomposition, namely trees and biconnected graphs.

\leorev{The following result essentially proves that the constraint matrix of Eq.~\eqref{xysys} has full rank, which is an easy consequence of graphic matroid theory. We prove the result by elementary means for self-containment.}
\begin{lemma}
  Let $G=(V,E)$ be a tree, and $Y\not=\varnothing$. Then Eq.~\eqref{xysys} has a solution for every $k\le K$.
  \label{tree}
\end{lemma}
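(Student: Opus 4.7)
My plan is to exploit the fact that a tree contains no cycles, which makes the cycle constraints \eqref{newcon} vacuous and leaves only the linear system \eqref{xysys} to solve. So, given any $y \in Y$, the task reduces to producing, for each fixed $k \leq K$, a vector $(x_{vk})_{v \in V}$ with $x_{ik} - x_{jk} = y_{ijk}$ for every $\{i,j\} \in E$. The natural approach is propagation along the tree from an arbitrarily chosen root.

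Concretely, I would fix an arbitrary vertex $r \in V$ as a root, set $x_{rk} = 0$ (this is just to pin down the one translational degree of freedom), and then define $x_{vk}$ by induction on the distance from $r$. For each non-root vertex $v$, the tree structure guarantees a unique parent $u$ along the unique $r$-to-$v$ path; define $x_{vk} := x_{uk} - y_{uvk}$, using the antisymmetry convention $y_{jik} = -y_{ijk}$ inherent in the meaning of $y$ as a projected oriented edge length. Since every vertex other than $r$ has exactly one parent, the recursion is well-defined and terminates.

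To finish, I would verify that the construction satisfies \eqref{xysys} on every tree edge. Each edge $\{i,j\} \in E$ appears exactly once as a parent-child edge in the rooted tree (say $i$ parent of $j$), so by construction $x_{jk} = x_{ik} - y_{ijk}$, i.e.\ $x_{ik} - x_{jk} = y_{ijk}$, as required. Repeating this argument independently for each $k \leq K$ gives the desired solution.

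I do not anticipate a real obstacle here: the acyclicity of $G$ removes every potential consistency conflict between different paths connecting the same pair of vertices, and that is exactly why cycle constraints do not enter. The only delicate point is notational bookkeeping of the orientation convention for $y_{ijk}$, which is straightforward. The lemma is essentially a statement that, for trees, the edge-difference map $x \mapsto (x_{ik} - x_{jk})$ is surjective modulo translations, a fact that this rooted induction makes immediate.
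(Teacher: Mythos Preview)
Your argument is correct, but it differs from the paper's proof. The paper argues via linear algebra: letting $M^k$ denote the coefficient matrix of the system Eq.~\eqref{xysys}, it shows by induction on $|E|$ that $M^k$ has full row rank (adding one leaf edge adds one new column with a $-1$ in the new row and zeros elsewhere, so the rank increases by one), whence $\mathsf{rank}(M^k)=\mathsf{rank}((M^k,y^k))$ for any right-hand side $y^k$, and the system is solvable. Your approach is instead purely combinatorial and constructive: root the tree, fix $x_{rk}=0$, and propagate along parent--child edges. Both are valid; yours is more elementary and yields an explicit solution, while the paper's rank-based argument has the advantage of being methodologically uniform with the proof of the companion Lemma~\ref{biconn} for biconnected graphs, where a direct propagation is no longer available and rank comparisons become essential. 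The only small care point in your write-up is the orientation convention for $y_{ijk}$, which you note and handle correctly.
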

\begin{proof}
\leorev{Let $M$ be the coefficient matrix of the system of equations~\eqref{xysys}, for a given $k\le K$; and let $y^{k}$ be the vector $(y_{uvk}\;|\;\{u,v\}\in E)$. We note that, since $M$ is the (transposed) incidence matrix of $G$, only the right-hand side of the system changes for each $k$.} We aim at proving that $M$ and $(M,y^{k})$ have the same rank, and that this rank is full. We proceed by induction on the size $|E|$ of the tree. The base case, where $|E|=1$ and $G$ consists of a single edge $\{u,v\}$, yields $M=(1,-1)$ with rank 1 for each $k\le K$. By inspection, $(M,y_{uvk})$ also has rank 1 for any $y_{uvk}$. Consider a tree $G'$ with one fewer edge (say, $\{u,v\}$) than $G$, such that $V\smallsetminus V(G')=\{v\}$. Let the corresponding system Eq.~\eqref{xysys} $\tilde{M}x=\tilde{y}$ satisfy $\mathsf{rank}(\tilde{M})=\mathsf{rank}(\tilde{M},\tilde{y}^{k})$, for all $k\le K$. Then the shape of $M$ is:
  \begin{equation*}
    M = \left(\begin{array}{cc} \tilde{M} & 0 \\  e_u & -1\end{array}\right),
  \end{equation*}
  where $e_u=(0,\ldots,0,1_u,0,\ldots,0)$. This shows that $\mathsf{rank}(M)=\mathsf{rank}(\tilde{M})+1$, that this rank is full, and hence also that $\mathsf{rank}(M)=\mathsf{rank}((M,y^{k}))$. 
\end{proof}

\begin{lemma}
  Let $G=(V,E)$ be biconnected, and $Y\not=\varnothing$. Then Eq.~\eqref{xysys} has a solution for every $k\le K$. \label{biconn}
\end{lemma}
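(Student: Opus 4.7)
My approach would be to reduce to Lemma~\ref{tree} by working with a spanning tree of $G$. Since $G$ is biconnected it is in particular connected (Lemma~\ref{biconncutv}), so $G$ admits a spanning tree $T=(V,E_T)$. The restriction of $y$ to $E_T$ trivially satisfies Eq.~\eqref{newsys} on those edges, and Eq.~\eqref{newcon} is vacuously true on $T$, which has no cycles. Hence Lemma~\ref{tree}, applied to $T$, yields $x^\ast\in\mathbb{R}^{nK}$ with $x^\ast_{ik}-x^\ast_{jk}=y_{ijk}$ for every tree edge $\{i,j\}\in E_T$ and every $k\le K$.

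It remains to verify that this same $x^\ast$ also satisfies the equations in Eq.~\eqref{xysys} indexed by the non-tree edges. Fix any $\{u,v\}\in E\smallsetminus E_T$ and any $k\le K$. Then $T\cup\{\{u,v\}\}$ contains a unique cycle $C_{uv}$, the fundamental cycle of $\{u,v\}$ with respect to $T$, consisting of $\{u,v\}$ together with the unique $u$-to-$v$ path $P_{uv}$ in $T$. Orienting $C_{uv}$ consistently as in Lemma~\ref{lemcycle} and invoking Eq.~\eqref{newcon} on $C_{uv}$ shows that the signed sum of the $y_{\cdot k}$'s around $C_{uv}$ vanishes. Substituting $y_{abk}=x^\ast_{ak}-x^\ast_{bk}$ for each tree edge $\{a,b\}\in P_{uv}$ and telescoping along $P_{uv}$ then recovers $y_{uvk}=x^\ast_{uk}-x^\ast_{vk}$, as required.

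The main technical nuisance is the bookkeeping of edge orientations: $y_{ijk}$ implicitly satisfies $y_{ijk}=-y_{jik}$, since it stands for $x_{ik}-x_{jk}$, and Eq.~\eqref{newcon} must be read with the edges of $C_{uv}$ oriented consistently along the closed trail, as in Lemma~\ref{lemcycle}. Once this is in place, the argument is essentially the standard observation that the cycle constraints are exactly what forces the vector $y^{k}=(y_{ijk}\mid\{i,j\}\in E)$ to lie in the column span of $M^k$, a matrix of full rank $n-1$ for any connected graph. I note in passing that biconnectedness of $G$ is not really exploited in this proof (connectedness would suffice); it becomes relevant only later when the biconnected blocks are glued back together across cut vertices in the proof of Thm.~\ref{mainthm}.
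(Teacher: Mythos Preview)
Your proof is correct, and it is genuinely different from the paper's argument. The paper proceeds by induction along an ear decomposition of the biconnected graph: it starts from a single cycle (base case, handled by a direct rank argument on the matrix in Eq.~\eqref{eqM}), and at each step adjoins a new fundamental cycle $C$ sharing a path $F$ with the already-realized subgraph $G'$; it then shows, by an explicit rank computation on the reduced system Eq.~\eqref{x'xC1}, that the partial realization $x'$ on $G'$ can be extended over $C$ by suitably translating a realization $x^C$ of $C$ so that it agrees with $x'$ on $V(F)$.

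Your route is shorter and more conceptual: fix a spanning tree, invoke Lemma~\ref{tree} once to get $x^\ast$ on the tree edges, and then use each fundamental-cycle constraint from Eq.~\eqref{newcon} to telescope along the tree path and recover the non-tree equation. This is essentially the classical fact that a vector $y^k$ lies in the column space of the vertex--edge incidence matrix $M^k$ if and only if it annihilates the cycle space, and that the fundamental cycles suffice as test vectors. Your observation that biconnectedness plays no role here (connectedness is enough) is also correct; the paper's ear-decomposition machinery genuinely uses biconnectedness, but only to organise the induction, not because the statement requires it. The paper's approach buys a more explicit, step-by-step construction of $x^\ast$ that mirrors the graph's ear structure, which may be pedagogically useful; your approach buys brevity and a cleaner separation between the linear-algebraic content (Lemma~\ref{tree}) and the single use of the cycle constraints.
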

\begin{proof}
We proceed by induction on the simple cycles of $G$. For the base case, we consider $G$ to be a graph consisting of a single cycle, with corresponding \leorev{$y \in Y$}. Since $G$ is a cycle, it has the same number of vertices and edges, say $q$. This implies that, for any fixed $k\le K$, Eq.~\eqref{xysys} is a linear system $Mx=y^k$ (where $y^k=(y_{uvk}\;|\;\{u,v\}\in E)$) with a $q\times q$ coefficient matrix:
\begin{equation}
M=\left(\begin{array}{ccccc}
  1 & -1 &    & & \\
    &  1 & -1 & & \\
    &    &  1 &\ddots & \\
    &    &    &\ddots & -1 \\
 -1 &    &    & & 1
\end{array}\right).
\label{eqM}
\end{equation}
\leorev{We remark that $M$ is the incidence matrix of $G$ as in the Proof of Lemma~\ref{tree}.} By Eq.~\eqref{sumcycles} and by inspection of Eq.~\eqref{eqM} it is clear that $\mathsf{rank}(M)=q-1$: then Eq.~\eqref{newcon} ensures that $\mathsf{rank}((M,y^k))=\mathsf{rank}(M)$, and therefore that Eq.~\eqref{xysys} has a solution.

We now tackle the induction step. The incidence vectors in $E$ of the cycles of any graph are a vector space of dimension $m-n+1$ over the finite field $\mathbb{F}_2=\{0,1\}$ \cite{seshu}. We consider a fundamental cycle basis $\mathcal{B}$ of $G$ (see Sect.~\ref{s:cycbas}). We assume that (a) $G'$ is a union of fundamental cycles in $\mathcal{B}'\subsetneq\mathcal{B}$, for which Eq.~\eqref{xysys} has a solution $x'$ by the induction hypothesis, and (b) that $C$ is another fundamental cycle in $\mathcal{B}\smallsetminus\mathcal{B}'$, with a solution $x^C$ of Eq.~\eqref{xysys} that exists by the base case. We aim at proving that Eq.~\eqref{xysys} has a solution for $G'\cup C$. Since $G$ is biconnected, the induction can proceed by ear decomposition \cite{eardecomp}, which means that $G'$ is also biconnected, and that $C$ is such that $E(G')\cap E(C)=F$ is a non-empty path in $G'$.

By Eq.~\eqref{newcon} applied to $C$, we have
\begin{equation}
  \forall k\le K \quad \sum\limits_{\{i,j\}\in C} y_{ijk} = 0.\label{thm1eq2}
\end{equation}
Since $x'$ satisfies Eq.~\eqref{xysys} by the induction hypothesis,
\begin{equation}
  \forall k\le K, \{i,j\}\in F \quad x'_{ik}-x'_{jk} = y_{ijk}.\label{thm1eq3}
\end{equation}
We replace Eq.~\eqref{thm1eq3} in Eq.~\eqref{thm1eq2}, obtaining
\begin{equation}
  \forall k\le K \quad \sum\limits_{\{i,j\}\in F} (x'_{ik}-x'_{jk}) = -\sum\limits_{\{i,j\}\in E(C)\smallsetminus F} y_{ijk}.\label{thm1eq4}
\end{equation}
Moreover, $x^C$ also satisfies Eq.~\eqref{xysys} over $C$, hence we can replace the right hand side of Eq.~\eqref{thm1eq4} with the corresponding terms in $x^C_{ik}-x^C_{jk}$ to get:
\begin{equation}
  \forall k\le K \quad \sum\limits_{\{i,j\}\in F} (x'_{ik}-x'_{jk}) + \sum\limits_{\{i,j\}\in E(C)\smallsetminus F} (x^C_{ik}-x^C_{jk}) = 0.\label{thm1eq5}
\end{equation}

We now fix $x'$, and aim at modifying $x^C$ so that: (a) $x^C$ matches $x'$ on $V(F)$, (b) the modified $x^C$ is still a solution of Eq.~\eqref{xysys} on $C$. We set $x^C_{ik}$ to $x'_{ik}$ for each $i\in V(F)$, and consider the resulting linear system Eq.~\eqref{xysys} given by $M$, as in Eq.~\eqref{eqM}, for each $k\le K$, where we assume without loss of generality that $V(F)=\{1,\ldots,r\}$ and $V(C)=\{r+1,\ldots,s\}$:
{\small
\begin{equation}
  \left.\begin{array}{rrrrrrcll}
    x'_{1k} &-\ x'_{2k} & & & & &= & y_{12k} & (1) \\
    & x'_{2k} & -\ x'_{3k} & & & &= & y_{23k}& (2) \\
    &\ddots&\ddots & & & &\vdots & \vdots & \vdots \\
  &  & x'_{rk} &-\ x^C_{r+1,k} & & & = & y_{r,r+1,k} & (r)\\
    & & & x^C_{r+1,k} &-\ x^C_{r+2,k} & & = & y_{r+1,r+2,k} & (r\!\!+\!\!1) \\
    & & & \ddots & \ddots & &\vdots &\vdots & \vdots \\
    & & & & x^C_{s-1,k} & -\ x^C_{sk} & = & y_{s-1,s,k} & (s\!\!-\!\!1) \\
   -\ x'_{1k} & & & & & x^C_{sk} & = & y_{1sk}. & (s) \\
  \end{array}\right\}\label{x'xC}
\end{equation}
}
The equations from ($1$) to ($r\!-\!1$) in Eq.~\eqref{x'xC} are satisfied by the induction hypothesis since they only depend on $x'$, so we can remove them from the system and assume $x'$ to be constant. We are left with:
\begin{equation}
  \left.\begin{array}{rrrcll}
  -\ x^C_{r+1,k} & & & = & y_{r,r+1,k}-x'_{rk} & (r)\\
    x^C_{r+1,k} &-\ x^C_{r+2,k} & & = & y_{r+1,r+2,k} & (r\!\!+\!\!1) \\
    \ddots & \ddots & &\vdots &\vdots & \vdots \\
    & x^C_{s-1,k} & -\ x^C_{sk} & = & y_{s-1,s,k} & (s\!\!-\!\!1) \\
    & & x^C_{sk} & = & y_{1sk}+x'_{1k}. & (s) \\
  \end{array}\right\}\label{x'xC1}
\end{equation}
Summing up the left hand sides of Eq.~\eqref{x'xC1}, we obtain:
\begin{eqnarray*}
  && -x^C_{r+1,k} + (x^C_{r+1,k}-x^C_{r+2,k}) + \cdots + (x^C_{s-1,k} - x^C_{sk}) + x^C_{sk} \\
  &=& (-x^C_{r+1,k} + x^C_{r+1,k}) +  \cdots + (-x^C_{sk} + x^C_{sk}) = 0
\end{eqnarray*}
for all $k\le K$, so the $(s-r+1)\times (s-r+1)$ matrix $\bar{M}$ of the $k$-th linear system Eq.~\eqref{x'xC1} has rank $\le s-r$. On the other hand, eliminating the first or last row makes it clear by inspection that the rest of the rows are linearly independent; therefore the rank of $\bar{M}$ is exactly $s-r$. Summing up the components of the right hand side vector $\bar{y}^k$ of Eq.~\eqref{x'xC1}, we obtain:
\begin{eqnarray*}
  \chi &=& - x'_{rk} + y_{r,r+1,k} + y_{r+1,r+2,k} + \cdots + y_{s-1,s,k} + y_{1sk} + x'_{1k} \\
  &=& (x'_{1k} - x'_{rk}) + \sum_{\{i,j\}\in E(C)\smallsetminus F} y_{ijk}.
\end{eqnarray*}
We remark that
\begin{eqnarray*}
  x'_{1k}-x'_{rk} &=& (x'_{1k}-x'_{2k})+(x'_{2k}-x'_{3k}) + \cdots + (x'_{r-1,k}+x'_{rk}) \\
  &=& \sum_{\{i,j\}\in F} (x'_{ik}-x'_{jk}) = \sum_{\{i,j\}\in F} y_{ijk}
\end{eqnarray*}
since $x'$ satisfies Eq.~\eqref{xysys} by the induction hypothesis. Therefore
\begin{equation*}
\chi = \sum_{\{i,j\}\in F} y_{ijk} + \sum_{\{i,j\}\in E(C)\smallsetminus F} y_{ijk} = \sum_{\{i,j\}\in E(C)} y_{ijk},
\end{equation*}
whence $\chi=0$ by Eq.~\eqref{thm1eq2}. This implies that $\mathsf{rank}((\bar{M},\bar{y}^k))=\mathsf{rank}(\bar{M})=s-r$. Therefore, Eq.~\eqref{x'xC1} has a solution, which yields the modified $x^C$ with properties (a) and (b) given above. This concludes the induction step and the proof. 
\end{proof}

We can finally give the proof of Thm.~\ref{mainthm}.

\noindent {\it Proof of Thm.~\ref{mainthm}.} The ($\Leftarrow$) part follows by Prop.~\ref{rightdir}. For the ($\Rightarrow$) part, we exploit a $1$-decomposition of $G$ into trees and biconnected subgraphs, derive solutions to Eq.~\eqref{xysys} for each subgraph, and show that the solutions can be easily combined to yield a solution to Eq.~\eqref{xysys} for the whole graph $G$.

We assume without loss of generality that $G$ is connected (otherwise each connected component can be treated separately), and consider  a $1$-decomposition $\mathcal{D}=\{G_1,\ldots,G_r\}$ of $G$. By Lemmata \ref{tree} and \ref{biconn}, there exist solutions $x^1,\dots, x^r$ to Eq.~\eqref{xysys} applied to $G_1,\ldots,G_r$ respectively. Consider the graph
\[\mathscr{D}=(\mathcal{D},\; \{\{i,j\}\;|\; 1\le i\not=j\le r\land |V(G_i)\cap V(G_j)|=1\}).\]
By Lemma \ref{lem01a2}, $\mathscr{D}$ is a tree: otherwise, a cycle in $\mathcal{D}$ would be a contraction of a cycle in $G$ not included in a single $G_i$, against Lemma \ref{lem01a2}. This allows us to reorder $\mathcal{D}$ so that, for each $j>1$, there is a unique $i<j$ such that $\{i,j\}\in E(\mathscr{D})$.

We remark that, for each $i\le r$, $x^i$ is a realization of $G_i$ in $\mathbb{R}^K$ by Eq.~\eqref{xysys}-\eqref{xysys1b}. More precisely, $x^i$ is a $|V(G_i)|\times K$ matrix $x^i=(x^i_{\ell k})$ so that $x^i_\ell=(x^i_{\ell 1},\ldots,x^i_{\ell K})$ is the position of vertex $\ell\in V(G_i)$ in $\mathbb{R}^K$. Note that the realizations $x^1,\ldots,x^r$ can be modified by translations without changing the values of $y$ (by inspection of Eq.~\eqref{xysys}).

We now construct a solution $\bar{x}$ of Eq.~\eqref{xysys} for $G$ by induction on $\mathcal{D}$ ordered as described above. For the base case $i=1$, we fix $x^1$ in any way (e.g.~by taking the centroid of the rows of $x^1$ to be the origin), and initialize the first $|V(G_1)|$ rows of $\bar{x}$ with those of $x^1$. For any $i>1$, we identify the unique predecessor $j$ of $i$ in the order on $\mathcal{D}$. The induction hypothesis ensures the existence of a solution $\bar{x}$ of the union of $G_1,\ldots,G_j$. Consider the cut vertex $v$ in $V(G_j)\cap V(G_i)$ guaranteed by definition of the order on $\mathcal{D}$, and let $\bar{x}_v\in\mathbb{R}^K$ be its position. Then the translation $\tilde{x}^i=x^i-\mathbf{1}\transpose{(x^i_{v}-\bar{x}_v)}$ yields another valid solution of Eq.~\eqref{xysys} applied to $G^i$ by translation invariance, and this solution is such that $\tilde{x}^i_{v} = \bar{x}_v$. Therefore, using the rows of $\tilde{x}^i$, $\bar{x}$ can be extended to a solution of Eq.~\eqref{xysys} applied to the union of $G_1,\ldots,G_j$ and $G^i$, as claimed. 

Thm.~\ref{mainthm} can also be interpreted as a polynomial reduction of the EDGP to the problem of finding a solution of Eq.~\eqref{newsys} and \eqref{newcon}.
\begin{corollary}
  Deciding feasibility of Eq.~\eqref{newsys} and \eqref{newcon} is $\mathbf{NP}$-hard.
  \label{nphard}
\end{corollary}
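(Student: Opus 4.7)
The plan is to exhibit a polynomial-time many-one reduction from the EDGP to the feasibility question for the system given by Eq.~\eqref{newsys} and \eqref{newcon}; since the EDGP is \textbf{NP}-hard (for instance, weakly \textbf{NP}-hard already on simple cycle graphs via the reduction from \textsc{Partition} due to Saxe, as mentioned in Sect.~\ref{s:intro}), the corollary follows immediately from Thm.~\ref{mainthm}.

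The equivalence between the two decision problems is the direct content of Thm.~\ref{mainthm}, so the only real issue to address is the size of the produced instance. The number of decision variables $y_{ijk}$ is $Km$, which is polynomial in the input. Eq.~\eqref{newsys} contributes exactly $m$ quadratic constraints. The delicate part is Eq.~\eqref{newcon}, which, as written, involves one equation for every cycle $C$ of $G$ and every $k\le K$: the number of cycles in $G$ can be exponential in $n$, so a literal transcription would not yield a polynomial reduction. The main step of the proof is therefore to argue that it suffices to impose Eq.~\eqref{newcon} only over a fundamental cycle basis $\mathcal{B}$ of $G$ (of cardinality $m-n+1$, hence polynomial), exactly as already exploited in the induction of Lemma~\ref{biconn}: for any cycle $C$ of $G$ the incidence vector of $C$ over $\mathbb{F}_2$ is a symmetric-difference combination of basis cycles in $\mathcal{B}$, and by choosing consistent orientations the corresponding linear equation in the $y$ variables is a signed sum of the basis equations. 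Consequently, if the cycle-basis constraints hold, then so do the constraints for all cycles, and the restricted system is equivalent to the full one.

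Putting these observations together, given an EDGP instance $(K,G,d)$ one computes a spanning tree of $G$ in polynomial time, reads off a fundamental cycle basis $\mathcal{B}$, writes down the $m$ equations in Eq.~\eqref{newsys} and the $K|\mathcal{B}|$ equations in Eq.~\eqref{newcon} restricted to $\mathcal{B}$, and obtains a system of polynomially many polynomial equations in polynomially many variables whose feasibility coincides, by Thm.~\ref{mainthm}, with $(K,G,d)$ being a YES instance of the EDGP. Composing this reduction with the \textbf{NP}-hardness of the EDGP proves that deciding feasibility of Eq.~\eqref{newsys} and \eqref{newcon} is \textbf{NP}-hard. The only conceptual obstacle is the cycle-basis reduction above; once this is established, the rest of the argument is a one-line invocation of Thm.~\ref{mainthm} and of known hardness of the EDGP.
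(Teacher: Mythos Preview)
Your argument is correct, but it does substantially more work than the paper. The paper's proof is a single line: ``By reduction from EDGP using Thm.~\ref{mainthm}.'' The point is that the feasibility problem in the corollary is understood to take the same input $(K,G,d)$ as the EDGP itself --- the constraints in Eq.~\eqref{newcon} are implicitly quantified over all cycles of $G$, not handed over as an explicit list --- so the reduction is literally the identity map on instances, and Thm.~\ref{mainthm} asserts exactly that the two decision questions coincide. Combined with the known \textbf{NP}-hardness of the EDGP, nothing else is needed.

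Your concern about the exponential number of cycle constraints is legitimate if one insists on an \emph{explicitly written} polynomial-size system as the target of the reduction, and your resolution via a fundamental cycle basis is correct. In fact this is precisely the content of Prop.~\ref{propnewcon}, which the paper proves \emph{after} the corollary, in Sect.~\ref{s:cycbas}; you have essentially anticipated that result. One small imprecision worth flagging: you phrase the basis argument over $\mathbb{F}_2$ (symmetric differences), whereas the constraints in Eq.~\eqref{newcon} are real linear equations, so the relevant linear dependence must hold over $\mathbb{Q}$. This is harmless for a fundamental cycle basis --- each basis cycle contains a unique chord edge, so the (signed) incidence vectors are linearly independent over any field and span the directed cycle space over $\mathbb{Q}$ --- but the paper is careful to state and prove this over $\mathbb{Q}$ in Prop.~\ref{propnewcon}, and your sketch would benefit from the same precision.
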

\begin{proof}
  By reduction from EDGP using Thm.~\ref{mainthm}. 
\end{proof}

A remarkable consequence of Thm.~\ref{mainthm} is that it allows a decomposition of the computation of the realization $x$ into two stages: first, solve Eq.~\eqref{newsys}-\eqref{newcon} to find a feasible $y^\ast$; then solve
\begin{equation}
  \forall k\le K, \{i,j\}\in E\quad x_{ik}-x_{jk} = y_{ijk}^\ast \label{xysys2}
\end{equation}
to find a realization $x^\ast$. We note that Eq.~\eqref{xysys2} is just a restatement of Eq.~\eqref{xysys} universally quantified over $k$.
\begin{corollary}
  \leorev{Given an EDGP instance $(K, G)$ and a solution $y^\ast \in Y$, any solution $x^\ast$ of Eq.~\eqref{xysys2} is a valid realization of the given instance.}
  \label{2stage}
\end{corollary}
\begin{proof}
  The feasibility of Eq.~\eqref{xysys2} with the right hand side replaced \leorev{by $y^\ast \in Y$} follows directly from Thm.~\ref{mainthm}, since if such a $y^\ast$ exists then the EDGP is feasible. 
\end{proof}
The first stage is $\mathbf{NP}$-hard by Cor.~\ref{nphard}, while the second stage is tractable, since solving linear systems can be done in polynomial time.

\begin{remark}
  \label{xysysfeas}
  Note that Eq.~\eqref{xysys2} has $Km$ equations, but its rank may be lower, since there are only $Kn$ variables: in particular, Eq.~\eqref{xysys2} may be an overdetermined linear system. The feasibility of this system is guaranteed by Cor.~\ref{2stage}; in particular, the steps of the proof of Thm.~\ref{mainthm} imply that Eq.~\eqref{xysys2} loses rank w.r.t.~$Km$ according to the incidence of the edges in the cycles of $G$. In other words, any solution $y'$ to Eq.~\eqref{newcon} provides a right hand side to Eq.~\eqref{xysys2} that makes the system feasible.
\end{remark}

The issue with Thm.~\eqref{mainthm} is that it relies on the exponentially large family of constraints Eq.~\eqref{newcon}. While this is sometimes addressed by algorithmic techniques such as row generation, we shall see in the following that it suffices to consider a polynomial set of cycles (which, moreover, can be found in polynomial time) in the quantifier of Eq.~\eqref{newcon}.

\section{The cycle vector space and its bases}
\label{s:cycbas}
We recall that incidence vectors of cycles (in a Euclidean space having $|E|$ dimensions) form a vector space over a field $\mathbb{F}$, which means that every cycle can be expressed as a weighted sum of cycles in a basis. In this interpretation, a {\it cycle} in $G$ is simply a subgraph of $G$ where each vertex has even degree: we denote their set by $\mathcal{C}$. This means that Eq.~\eqref{newcon} is actually quantified over a subset of $\mathcal{C}$, namely the simple connected cycles. Every basis has cardinality $m-n+a$, where $a$ is the number of connected components of $G$. If $G$ is connected, cycle bases have cardinality $m-n+1$ \cite{seshu}.

Our interest in introducing cycle bases is that we would like to quantify Eq.~\eqref{newcon} polynomially rather than exponentially in the size of $G$. Our goal is to replace ``$C$ is any simple connected cycle in $\mathcal{C}$'' by ``$C$ is a cycle in a cycle basis of $G$''. In order to show that this limited quantification is enough to imply every constraint in Eq.~\eqref{newcon}, we have to show that, for each simple connected cycle $C\in\mathcal{C}$, the corresponding constraint in Eq.~\eqref{newcon} can be obtained as a weighted sum of constraints corresponding to the basis elements.

Another feature of Eq.~\eqref{newcon} to keep in mind is that edges are implicitly given a direction: for each cycle, the term for the {\it undirected} edge $\{i,j\}$ in Eq.~\eqref{newcon} is $(x_{ik}-x_{jk})$. Note that while $\{i,j\}$ is exactly the same vertex set as $\{j,i\}$, the corresponding term is either positive or not, depending on the direction $(i,j)$ or $(j,i)$. We deal with this issue by arbitrarily directing the edges in $E$ to obtain a set $A$ of arcs, and considering {\it directed} cycles in the directed graph $\bar{G}=(V,A)$. In this interpretation, the incidence vector of a directed cycle $C$ of $\bar{G}$ is a vector $c^C\in\mathbb{R}^m$ satisfying \cite[\S 2, p.~201]{mehlhorn_cb}:
\begin{equation}
  \forall j\in V(C)\quad \sum\limits_{(i,j)\in A} c^C_{ij} = \sum\limits_{(j,\ell)\in A} c^C_{j\ell}.\label{flowcons}
\end{equation}

A directed circuit $D$ of $\bar{G}$ is obtained by applying the edge directions from $\bar{G}$ to a connected subgraph of $G$ where each vertex has degree exactly 2 (note that a directed circuit need not be strongly connected, although its undirected version is connected). Its incidence vector $c^D\in\{-1,0,1\}^m$ is defined as follows:
\begin{equation*}
  \forall (i,j)\in A \qquad c^D_{ij} \triangleq \left\{\begin{array}{rcl} 1 & \mbox{if} & (i,j)\in A(D) \\
  -1 & \mbox{if} & (j,i)\in A(D) \\
  0 & \mbox{otherwise} & \end{array}\right.
\end{equation*}
where we have used $A(D)$ to mean the arcs in the subgraph $D$. In other words, whenever we walk over an arc $(i,j)$ in the natural direction $i\to j$ we let the $(i,j)$-th component of $c^D$ be $1$; if we walk over $(i,j)$ in the direction $j\to i$ we assign a $-1$, and otherwise a zero.

\subsection{Constraints over cycle bases}
\label{ccycbas}
The properties of undirected and directed cycle bases have been investigated in a sequence of papers by many authors, culminating with \cite{mehlhorn_cb}. We now prove that it suffices to quantify Eq.~\eqref{newcon} over a directed cycle basis.
\begin{proposition}
  \label{propnewcon}
  Let $\mathcal{B}$ be a directed cycle basis of $\bar{G}$ over $\mathbb{Q}$. Then Eq.~\eqref{newcon} holds if and only if:
\begin{equation}
  \forall k\le K, B\in\mathcal{B}\qquad \sum\limits_{(i,j)\in A(B)} c^B_{ij}y_{ijk} = 0.\label{newcon2}
\end{equation}
\end{proposition}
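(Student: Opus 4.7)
My plan is to recognize both Eq.~\eqref{newcon} and Eq.~\eqref{newcon2} as the vanishing of a single linear functional on the cycle vector space, and then extract the equivalence directly from the definition of a basis.

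First, for each $k\le K$ I would define the linear form $\phi_k:\mathbb{R}^m\to\mathbb{R}$ by $\phi_k(c)=\sum_{(i,j)\in A}c_{ij}y_{ijk}$, so that Eq.~\eqref{newcon2} is exactly the statement $\phi_k(c^B)=0$ for every $B\in\mathcal{B}$. The next step is to translate Eq.~\eqref{newcon} into the same language. Recall from Lemma~\ref{lemcycle} that the sum there is really taken along a closed trail $W(C)$, so that each edge $\{i,j\}$ of the cycle $C$ contributes $y_{ijk}$ when traversed in its natural $A$-direction and $-y_{ijk}$ when traversed against it. This is precisely $\sum_{(i,j)\in A}c^C_{ij}y_{ijk}=\phi_k(c^C)$ for the signed incidence vector $c^C$ of a directed circuit underlying $C$. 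Thus Eq.~\eqref{newcon} is equivalent to $\phi_k(c^C)=0$ for every directed circuit $C$ in $\bar{G}$ and every $k\le K$; since an arbitrary even-degree subgraph decomposes into edge-disjoint directed circuits and $\phi_k$ is additive, this in turn is equivalent to $\phi_k(c)=0$ for every element $c$ of the cycle space.

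The ($\Rightarrow$) direction is then immediate: each basis vector $c^B$ belongs to the cycle space, so $\phi_k(c^B)=0$ follows from the hypothesis. For the ($\Leftarrow$) direction, assume Eq.~\eqref{newcon2}, i.e.~$\phi_k(c^B)=0$ for every $B\in\mathcal{B}$ and every $k$. Given any directed circuit $C$ in $\bar{G}$, the defining property of a directed cycle basis over $\mathbb{Q}$ yields rationals $\lambda_B$ with $c^C=\sum_{B\in\mathcal{B}}\lambda_B c^B$; by linearity $\phi_k(c^C)=\sum_{B\in\mathcal{B}}\lambda_B\phi_k(c^B)=0$, which in light of the translation above is exactly Eq.~\eqref{newcon}.

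The only subtle point is the sign bookkeeping that turns the undirected-looking statement of Eq.~\eqref{newcon} into the signed functional $\phi_k$ on directed incidence vectors; once this is settled, the rest is a textbook application of the basis definition, and no further graph-theoretic input is needed beyond the existence of a directed $\mathbb{Q}$-cycle basis, which is part of the hypothesis.
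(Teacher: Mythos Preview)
Your proof is correct and follows essentially the same route as the paper: both directions rest on writing the constraint as a linear functional on directed incidence vectors, invoking the basis expansion $c^{\bar C}=\sum_B\gamma_B c^B$ for ($\Leftarrow$), and noting that basis elements are themselves cycles for ($\Rightarrow$). Your packaging via the functional $\phi_k$ is slightly cleaner than the paper's explicit double sum in Eq.~\eqref{eqcB}, but the mathematical content is identical.
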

\begin{proof}
Necessity $\eqref{newcon}\Rightarrow\eqref{newcon2}$ follows because Eq.~\eqref{newcon} is quantified over all cycles: in particular, it follows for any undirected cycle in any undirected cycle basis. Moreover, the signs of all terms in the sum of Eq.~\eqref{newcon2} are consistent, by definition, with the arbitrary edge direction chosen for $\bar{G}$. \\ [0.3em]
Next, we claim sufficiency $\eqref{newcon2}\Rightarrow\eqref{newcon}$. Let $C\in\mathcal{C}$ be a simple cycle, and $\bar{C}$ be its directed version with the directions inherited from $\bar{G}$. Since $\mathcal{B}$ is a cycle basis, we know that there is a coefficient vector $(\gamma_B\;|\;B\in\mathcal{B})\in\mathbb{R}^{|\mathcal{B}|}$ such that:
  \begin{equation}
    c^{\bar{C}} = \sum\limits_{B\in\mathcal{B}} \gamma_B c^B. \label{eqcC}
  \end{equation}
  We now consider the expression:
  \begin{equation}
    \forall k\le K\quad \sum\limits_{B\in\mathcal{B}} \gamma_B\sum\limits_{(i,j)\in A(B)} c^B_{ij} y_{ijk}.\label{eqcB}
  \end{equation}
  On the one hand, by Eq.~\eqref{eqcC}, Eq.~\eqref{eqcB} is identically equal to $\sum_{(i,j)\in A(\bar{C})} c^{\bar{C}}_{ij}y_{ijk}$ for each $k\le K$; on the other hand, each inner sum in Eq.~\eqref{eqcB} is equal to zero by Eq.~\eqref{newcon2}. This implies $\sum_{(i,j)\in A(\bar{C})} c^{\bar{C}}_{ij} y_{ijk} = 0$ for each $k\le K$. Since $C$ is simple and connected, $\bar{C}$ is a directed circuit. This implies that $c^{\bar{C}}\in\{-1,0,1\}$. Now it suffices to replace $-y_{ijk}$ with $y_{jik}$ to obtain
  \[\forall k\le K\quad \sum\limits_{\{i,j\}\in E(C)} y_{ijk} = 0, \]
  where the edges on $C$ are indexed in such a way as to ensure they appear in order of consecutive adjacency. 
\end{proof}
Obviously, if $\mathcal{B}$ has minimum (or just small) cardinality, Eq.~\eqref{newcon2} will be sparsest (or just sparse), which is often a desirable property of linear constraints occurring in MP formulations. Hence we should attempt to find short cycle bases $\mathcal{B}$.

In summary, given a basis $\mathcal{B}$ of the directed cycle space of $\bar{G}$ where $c^B$ is the incidence vector of a cycle $B\in\mathcal{B}$, the following:
\begin{equation}
  \left.
  \begin{array}{rrcl}
    \min\limits_{s\ge 0, y} & \sum\limits_{\{i,j\}\in E} (s_{ij}^+ + s^-_{ij}) && \\
  \forall (i,j)\in A(\bar{G}) &  \sum\limits_{k\le K} y_{ijk}^2 - d_{ij}^2 &=& s^+_{ij} - s^-_{ij} \\
  \forall k\le K, B\in\mathcal{B} & \sum\limits_{(i,j)\in A(B)} c^B_{ij}y_{ijk} &=& 0
  \end{array}\right\}\label{dgpnew}
\end{equation}
is a valid formulation for the EDGP. The solution of Eq.~\eqref{dgpnew} yields a feasible vector $y^\ast$. As pointed out in Cor.~\ref{2stage}, we must then solve Eq.~\eqref{xysys2} to obtain a realization $x^\ast$ for $G$.

\subsection{How to find directed cycle bases}
\label{s:findcycbas}
We require directed cycle bases over $\mathbb{Q}$. By \cite[Thm.~2.4]{mehlhorn_cb}, each undirected cycle basis gives rise to a directed cycle basis (so it suffices to find a cycle basis of $G$ and then direct the cycles using the directions in $\bar{G}$). Horton's algorithm \cite{horton87} and its variants \cite{golynski,liebchenrizzi} find a minimum cost cycle basis in polynomial time. The most efficient deterministic variant is $O(m^3n)$ \cite{liebchenrizzi}, and the most efficient randomized variant has the complexity of matrix multiplication. Existing approximation algorithms have marginally better complexity.

It is not clear, however, that the provably sparsest constraint system will make the DGP actually easier to solve. We therefore consider a much simpler algorithm: starting from a spanning tree, we pick the $m-n+1$ circuits that each {\it chord} (i.e., non-tree) edge defines with the rest of the tree. This algorithm \cite{paton} yields a {\it fundamental} cycle basis (FCB). Finding the minimum FCB is known to be {\bf NP}-hard \cite{deo82}, but heuristics based on spanning trees prove to be very easy to implement and work reasonably well \cite{deo82} (optionally, their cost can be improved by an edge-swapping phase \cite{fcbmmor,fcbmatroid}).

\section{The Eulerian cycle relaxation}
\label{s:eulerian}
In this section we construct a relaxation of Eq.~\eqref{dgpnew}. This is accomplished by substituting the \leorev{$K|\mathcal{B}|$} cycle base constraints in Eq.~\eqref{newcon2} --- occurring as the last line in Eq.~\eqref{dgpnew} --- with \leorev{the $K$ constraints} obtained by considering a single Eulerian circuit in the given graph.

We follow a standard construction in order to find a Eulerian circuit, see e.g.~\cite{jungnickel}. We let $G'$ be the multigraph obtained from $G$ by adding sufficiently many parallel edges to $G$, so that the degree of each vertex in $G'$ is even. This can always be done by \cite{edmonds2}, which implies that $G'$ is Eulerian, i.e.~it has a cycle incident with every edge in $G'$ exactly once. We let $\mathscr{E}$ be a Eulerian cycle in $G'$, and let $\bar{\mathscr{E}}$ be either of the two orientations of $\mathscr{E}$ obtained by walking over the cycle. We let $\bar{G}'$ be the digraph induced by the Eulerian circuit $\bar{\mathscr{E}}$. For each $\{i,j\}\in E$ let $H_{ij}$ be the number of parallel edges between $i,j$ in $G'$.

We note that $\bar{G}'$ might have parallel and antiparallel arcs. Consider the family of arc subsets $\mathcal{H}_{ij}=\{(i',j',h)\;|\;h\le H_{ij}\land \{i',j'\}=\{i,j\}\}$ of $A(\bar{G}')$. We replace each arc $(i',j',h)\in\mathcal{H}_{ij}$ having $h>1$ by an oriented $2$-path $p_{i'j'h}=\{(i',v_{ijh}),(v_{ijh},j')\}$ involving a new added vertex $v_{ijh}$. We call $\tilde{G}$ the digraph obtained from $\bar{G}'$ with this replacement. We remark that $\tilde{G}$ is simple (it has no parallel/antiparallel arcs) by construction. Moreover, $\tilde{G}$ is a Eulerian digraph: take the Eulerian circuit $\bar{\mathscr{E}}$ in $\bar{G}'$, and, every time it traverses a parallel/antiparallel arc $(i',j',h)\in\mathcal{H}_{ij}$ with $h>1$, let it traverse the oriented $2$-path replacement $p_{i'j'h}$ instead: this is clearly a Eulerian circuit in $\tilde{G}$, which we call $\mathscr{C}$.

Next we consider the simple graph $\hat{G}$ obtained by replacing each arc in $\tilde{G}$ with an \leorev{(undirected)} edge. Let $\hat{V}=\{v_{ijh}\;|\;\{i,j\}\in E\land h>1\}$, and $\hat{E}$ be the subset of $E(\hat{G})$ obtained by losing the orientation of the arcs in
\[
\bigcup\limits_{\substack{(i',j',h)\in \mathcal{H}_{ij}\\ \{i,j\}\in E\land h>1}} p_{i'j'h}\,,
\]
i.e., the union of all the \leorev{edges} from the $2$-path replacements.
We note that, by construction,
\begin{equation}
	\hat{V}=V(\hat{G})\smallsetminus V\quad\land\quad
	\hat{E}=E(\hat{G})\smallsetminus E.\label{EVhat}
\end{equation}

Let $c^{\mathscr{C}}_{ij}\in\{1,-1\}$ be the orientation of $(i,j)$ in $\mathscr{C}$ w.r.t.~$\tilde{G}$; let $\hat{\mathscr{C}}$ be the simple Eulerian cycle in $\hat{G}$ corresponding to $\mathscr{C}$.

We can now prove the main result of this section.
\begin{proposition}
  \label{prop:dgprel}
  The formulation
\begin{equation}
  \left.
  \begin{array}{rrcl}
    \min\limits_{s\ge 0, y} & \sum\limits_{\{i,j\}\in E} (s_{ij}^+ + s^-_{ij}) && \\
  \forall (i,j)\in A(\tilde{G}) &  \sum\limits_{k\le K} y_{ijk}^2 - d_{ij}^2 &=& s^+_{ij} - s^-_{ij} \\
  \forall k\le K & \sum\limits_{(i,j)\in \mathscr{C}} c^{\mathscr{C}}_{ij}y_{ijk} &=& 0\quad(\dag)
  \end{array}\right\}\label{dgprel}
\end{equation}
is a relaxation of Eq.~\eqref{dgpnew}.
\end{proposition}
\begin{proof}
We first consider a variant of the cycle formulation in Eq.~\eqref{dgpnew} applied to $\hat{G}$, where, from the constraints corresponding to Eq.~\eqref{newsys} (second line of Eq.~\eqref{dgpnew}), we omit those indexed by $\hat{E}$. We call this variant ($\star$). We \underline{claim} that ($\star$) is an exact reformulation of Eq.~\eqref{dgpnew} applied to $G$. The claim holds because $E(\hat{G})\smallsetminus\hat{E}=E$ by Eq.~\eqref{EVhat}, and because the signs of the $y$ variables are irrelevant in Eq.~\eqref{newsys} since they are squared. Now, since $\hat{\mathscr{C}}$ is a Eulerian cycle in $\hat{G}$, Eq.~($\dag$) must hold in $\tilde{G}$ for any orientation of the edges of $\mathscr{C}$, by Lemma~\ref{lemcycle}. Therefore, Eq.~($\dag$) is an aggregation of the constraints in Eq.~\eqref{newcon2}, which occur within the reformulation ($\star$). So Eq.~\eqref{dgprel} is a relaxation of ($\star$). The proposition follows because of the \underline{claim}. 
\end{proof}

Note that Eq.~\eqref{dgprel} provides a solution $\bar{y}$ that may not satisfy Eq.~\eqref{newcon2}, which also guarantee feasibility in Eq.~\eqref{newcon} by Prop.~\ref{propnewcon}. By Remark \ref{xysysfeas}, this implies that Cor.~\ref{2stage} is no longer applicable. In other words, \leorev{we cannot obtain a realization $x$ of $G$ from $\bar{y}$ using the linear system} in Eq.~\eqref{xysys2}, since $\bar{y}$ might well make Eq.~\eqref{xysys2} infeasibile. \leorev{We can fix this issue by adjoining Eq.~\eqref{xysys2} to Eq.~\eqref{dgprel} as additional constraints. For practical reasons we also propose to adjoin the \textit{centroid constraints}
\begin{equation}
\left.
\begin{array}{rcl}
\forall k \leq K && \sum\limits_{i\in V} x_{ik} = 0\,,
\label{centroid}
\end{array}\right.
\end{equation}
which provide a restriction of Eq.~\eqref{dgpnew} by only keeping realizations of $G$ having zero centroid (see Eq.~\eqref{go1}).}

\leorev{For a formulation $P$, we denote by $\val{P}$ its optimal objective function value.
\begin{lemma}
  \label{lem:xysys2}
  Let $P$ be Eq.~\eqref{dgpnew}, and $P'$ be $P$ with the $x$ variables and the constraints in Eq.~\eqref{xysys2} adjoined. Then $\val{P}=\val{P'}$.
\end{lemma}
\begin{proof}
This is a direct consequence of Cor.~\ref{2stage}.
\end{proof}
\begin{lemma}
\label{lem:centroid}
For any reformulation (or relaxation) $P$ of the EDGP involving the $x$ variables, let $P'$ be $P$ with the centroid constraints Eq.~\eqref{centroid} adjoined. Then $\val{P}=\val{P'}$.
\end{lemma}
\begin{proof}
  Since $P'$ is a restriction of $P$, and the optimization direction is minimization, we have $\val{P}\le\val{P'}$. Let $x$ be an optimal solution of $P$: then $x'=x-\mathsf{stack}(\centroid{x},n)$ (where the second term of the right hand side is the centroid row $K$-vector stacked $n$ times to yield an $n\times K$ matrix) is feasible in $P'$ by definition, which proves that $\val{P}\ge\val{P'}$. The result follows.
\end{proof}
}

\leorev{We define the Eulerian cycle-based relaxation formulation, derived from Eq.~\eqref{dgprel} by adjoining Eq.~\eqref{xysys2} and Eq.~\eqref{centroid}, as follows:}
\begin{equation}
  \left.
  \begin{array}{rrcl}
    \min\limits_{s\ge 0, x,y} & \sum\limits_{\{i,j\}\in E} (s_{ij}^+ + s^-_{ij}) && \\
  \forall (i,j)\in A(\tilde{G}) &  \sum\limits_{k\le K} y_{ijk}^2 - d_{ij}^2 &=& s^+_{ij} - s^-_{ij} \\
  \forall k\le K & \sum\limits_{(i,j)\in \mathscr{C}} c^{\mathscr{C}}_{ij}y_{ijk} &=& 0 \\
  \forall (i,j)\in A(\tilde{G}) & x_{ik}-x_{jk} &=& y_{ijk} \\
  \forall k\le K & \sum\limits_{i\in V} x_{ik} &=& 0.
  \end{array}\right\}\label{dgprel1}
\end{equation}

\leorev{
\begin{proposition}
  Eq.~\eqref{dgprel1} is a relaxation of the EDGP.
\end{proposition}
\begin{proof}
  Let us call Eq.~\eqref{dgprel} $R$ and Eq.~\eqref{dgpnew} $P$. By Prop.~\ref{prop:dgprel}, $R$ is a relaxation of $P$. By adjoining new variables $x$ and Eq.~\eqref{xysys2} as constraints to both $R$ and $P$, we obtain formulations $R',P'$ such that $R'$ is a relaxation of $P'$. But by Lemma \ref{lem:xysys2} we have that $\val{P'}=\val{P}$, so $R'$ is a relaxation of $P$, which is a valid formulation of the EDGP. Note that Eq.~\eqref{dgprel1} is $R'$ with the centroid constraints Eq.~\eqref{centroid} adjoined. By Lemma \ref{lem:centroid}, therefore, $\val{R'}=\val{\ref{dgprel1}}$. Thus, Eq.~\eqref{dgprel1} is a relaxation of the EDGP.
\end{proof}
}

\leorev{
  \begin{remark}
    \label{rem:dgprel1}
    In general, we have $\val{\ref{dgprel1}}\ge \val{\ref{dgprel}}$, since Lemma \ref{lem:xysys2} only holds for Eq.~\eqref{dgpnew}, but not for Eq.~\eqref{dgprel}, as mentioned under Prop.~\ref{prop:dgprel}. Therefore Eq.~\eqref{dgprel1} is a tighter relaxation than Eq.~\eqref{dgprel}.
  \end{remark}
}

\section{Computational experiments}
\label{s:compres}
The aim of this section is to compare the computational performance of the following EDGP formulations:
\begin{enumerate}[(i)]
\item the cycle-based formulation in Eq.~\eqref{dgpnew}, where the realization is retrieved as a post-processing stage using \eqref{xysys2} according to Cor.~\ref{2stage};
\item the Eulerian cycle-based relaxation in Eq.~\eqref{dgprel1};
\item the classic edge-based formulation in Eq.~\eqref{go1}.
\end{enumerate}
All of these formulations are nonconvex Nonlinear Programs (NLP), which are generally $\mathsf{NP}$-hard to solve. More specifically, all of these formulations are as hard to solve as the EDGP, which is $\mathsf{NP}$-hard.

As a solution algorithm, we used a very simple MultiStart (MS) heuristic based on calling a local NLP solver from a random initial starting point at each iteration, and updating the best solution found so far as needed: although there are better heuristics around \cite{dvnsjogo,zoo,mwu}, MS is the best trade-off between implementation simplicity and efficiency. Moreover, more efficient heuristics often change the formulation during their execution, which may hinder the meaning of this computational comparison between formulations.

We evaluate the quality of a realization $x$ of a graph $G$ according to mean (MDE) and largest distance error (LDE), defined this way:
\begin{eqnarray*}
  \mathsf{mde}(x,G) &=& \frac{1}{|E|} \sum\limits_{\{i,j\}\in E} \big| \|x_i-x_j\|_2 - d_{ij}\big|\\
  \mathsf{lde}(x,G) &=& \max\limits_{\{i,j\}\in E} \big| \|x_i-x_j\|_2 - d_{ij}\big|.
\end{eqnarray*}

\leorev{Furthermore, for each realization $x$ of a graph $G$ found by using the MS algorithm, we consider the value of the corresponding solution $\mathsf{solVal}(x, G)$. We note that, due to the heuristic nature of the MS, this value is not guaranteed tobe globally optimal.}

The CPU time taken to find the solution may also be important, depending on the application. In the control of underwater vehicles \cite{bahr}, for example, DGP instances might need to be solved in real time. In other applications, such as finding protein structure from distance data \cite{bipbip,jcim} (our application of choice), the CPU time is not so important.

Our tests were carried out on a single CPU of a 2.1GHz 4-CPU 8-core-per-CPU machine with 64GB RAM running Linux. The local NLP solver used within the MS heuristic was the IPOpt solver \cite{ipopt}. We remarked in some preliminary tests that IPOpt was considerably slowed down by variants of Eq.~\eqref{go} such as Eq.~\eqref{minabs}, which essentially move a nonconvexity on the objective to one in the constraints. The same holds for the cycle-based formulation in Eq.~\eqref{dgpnew}. We therefore reformulated Eq.~\eqref{dgpnew} as follows:
\begin{equation}
  \left.
  \begin{array}{rrcl}
    \min\limits_{y} & \sum\limits_{\{i,j\}\in A(\bar{G})} (\sum\limits_{k\le K} y_{ijk}^2 - d_{ij}^2)^2 && \\
  \forall k\le K, B\in\mathcal{B} & \sum\limits_{(i,j)\in A(B)} c^B_{ij}y_{ijk} &=& 0,
  \end{array}\right\}\label{dgpnew1}
\end{equation}
and Eq.~\eqref{dgprel1} similarly.

Our implementation consists of a mixture of Python 3 \cite{python3} and AMPL \cite{ampl} interfaced through \texttt{amplpy}. Cycle bases and Eulerian cycles are found using \texttt{networkX} \cite{networkX}. Solutions to the feasible but possibly overdetermined linear systems in Eq.~\eqref{xysys2} are obtained using an $\ell_1$ error minimization approach reformulated as a Linear Programming problem solved with CPLEX \cite{cplex129}.

\subsection{Results}
A benchmark on a diverse collection of randomly generated weighted graphs of small size and many different types, with a very similar set-up to the one discussed here, is presented in \cite{cycledgp-ctw20}. It was found that the cycle formulation finds better MDE values, while the edge formulation generally finds better LDE values and is faster. Some results on proteins, obtained with only 3 MS iterations, were also presented in \cite{cycledgp-ctw20}.

The benchmark we consider here contains medium to large scale protein graph instances realized in $\mathbb{R}^3$, all of which contain cycles. W.r.t.~the protein results presented in \cite{cycledgp-ctw20}, we integrated one more instance, \texttt{1tii}, which, at 69800 edges and 5684 vertices, is considerably larger than all the others. The results are given in \leorev{Tables \ref{t2} and \ref{t_solvals}.

In Table~\ref{t2},} we report instance name, instance sizes $m$ and $n$, then performance measures MDE, LDE and CPU for cycle, Eulerian and edge-based formulations. In the last three lines we report average, standard deviation, and number of instances where the formulation performed best, for all performance measures. In all tested cases, finding the cycle basis, the Eulerian cycles, and solving Eq.~\eqref{xysys2} took a small fraction of the total solution time. The missing result for instance \texttt{100d} on the Eulerian cycle reformulation is due to a failure occurred in the \texttt{networkX} module because the graph of \texttt{100d} is not connected.
\begin{table}[!ht]
  {\scriptsize
\begin{center}
  \hspace*{-0.8cm}\begin{tabular}{lrr|rrr|rrr|rrr} \hline
&   &  & \multicolumn{3}{c|}{MDE} & \multicolumn{3}{c|}{LDE} & \multicolumn{3}{c}{CPU} \\
{\it Instance}     & $m$  & $n$ & cycle & Eul   & edge      & cycle & Eul   & edge      & cycle    & Eul   & edge           \\ \hline
\texttt{1guu}      & 955  & 150 & 0.086 & 0.069 & \B{0.053} & 1.234 & 1.068 & \B{1.037} & \B{7.90} & 553.76   & 290.21      \\
\texttt{1guu-1}    & 959  & 150 & 0.080 & 0.082 & \B{0.059} & 1.013 & 1.069 & \B{0.980} & 9.67	   & 23.03    & \B{1.72}    \\
\texttt{1guu-4000} & 968  & 150 & 0.112 & 0.106 & \B{0.092} & 1.073 & 1.431 & \B{0.936} & 8.68	   & 10.77    & \B{1.56}    \\
\texttt{pept}      & 999  & 107 & \B{0.144} & 0.239 & 0.179 & 2.862 & \B{1.847} & 1.943 & 5.52	   & 4.72     & \B{1.4}     \\
\texttt{2kxa}     & 2711  & 177 & \B{0.051} & 0.119 & 0.172 & 3.705 & \B{2.826} & 3.813 & 21.53	   & 25.54    & \B{7.35}    \\
\texttt{res\_2kxa}& 2627  & 177 & \B{0.055} & 0.237 & 0.156 & \B{2.949} & 3.570 & 3.054 & 20.84	   & 21.20    & \B{12.44}   \\
\texttt{C0030pkl} & 3247  & 198 & \B{0.000} & 0.145 & 0.211 & \B{0.000} & 3.537 & 3.829 & 29.50	   & 26.69    & \B{7.36}    \\
\texttt{cassioli} & 4871  & 281 & 0.146 & 0.113 & \B{0.057} & 3.914 & 3.616 & \B{3.185} & 47.23	   & 48.44    & \B{14.51}   \\
\texttt{100d}     & 5741  & 488 & \B{0.201} & -     & 0.251 & \B{3.038} & -     & 3.987 & 387.32   & -        & \B{29.42}   \\
\texttt{hlx\_amb} & 6265  &392  & \B{0.105} & 0.214 & 0.119 & 3.836 & 3.888 & \B{3.485} & 120.25   & 80.27    & \B{20.54}   \\
\texttt{water}   & 11939  & 648 & \B{0.146} & 0.490 & 0.243 & \B{3.579} & 4.196 & 4.281 & 1346.69  & 399.42   & \B{224.66}  \\
\texttt{3al1}    & 17417  & 678 & \B{0.062} & 0.126 & 0.216 & 3.451 & \B{3.175} & 4.059 & 835.10   & 433.69   & \B{123.45}  \\
\texttt{1hpv}    & 18512 & 1629 & \B{0.385} & 0.402 & 0.416 & 3.847 & \B{3.831} & 4.015 & 10138.00 & 2387.29  & \B{442.70}  \\
\texttt{il2}     & 45251 & 2084 & 0.385 & \B{0.049} & 0.107 & 4.422 & \B{4.204} & 4.583 & 18141.22 & 9904.81  & \B{5255.76} \\
\texttt{1tii}    & 69800 & 5684 & 0.620 & 0.436 & \B{0.434} & 6.755 & 4.492 & \B{3.854} & 18846.37 & 38230.21 & \B{9039.28} \\ \hline
\textit{avg}     &       &      & \B{0.172} & 0.202 & 0.184 & \B{3.045} & 3.054 & 3.136 & 3331.05  & 3724.99  & \B{1031.49} \\
\textit{stdev}   &       &      & 0.167 & 0.144 & \B{0.118} & 1.673 & \B{1.204} & 1.272 & 6672.49  & 10272.3  & \B{2587.33} \\
\textit{$|\mbox{best}|$} &   &  & \B{9} &  1    &    5      &    4  &    5      & \B{6} &  1       & 0        & \B{14}
  \end{tabular}
\end{center}
  }
\caption{Cycle formulation vs.~Eulerian relaxation \gabjulrev{ vs.~edge formulation} performance\gabjulrev{s} on protein graphs (realizations in $K=3$ dimensions).}
\label{t2}
\end{table}

It appears that, on average, there is relatively little difference between the quality performances of these three \gab{E}DGP formulations on protein graphs of medium and large sizes. CPU-time wise, of course, the edge formulation is best. Cycle formulations, taken together, outperform the edge formulation on quality measures. The cycle-based formulation Eq.~\eqref{dgpnew} is slightly better than the other formulations for both MDE and LDE. The number of instances on which Eq.~\eqref{dgpnew} is best on quality measures is 13, against 11 for the edge-based formulation.

\leorev{In Table~\ref{t_solvals}, we report instance name and $\mathsf{solVal}$ for the cycle and the edge EGDP formulations. The three lines at the bottom of the table show the arithmetic and geometric mean of each column (``arithmean'' and ``geomean''), and the percentage of instances where the solution values of each formulation are smaller than the other (``best'').}

\begin{table}[!ht]
{\scriptsize
\begin{center}
\leorev{\begin{tabular}{lrr}
\hline
 & \multicolumn{2}{c}{solval} \\
Instance & cycle & edge \\
\hline
\texttt{1guu} & 9.27E+02 & \textbf{4.73E+02} \\
\texttt{1guu-1} & 8.91E+02 & \textbf{5.67E+02} \\
\texttt{1guu-4000} & 1.40E+03 & \textbf{1.01E+03} \\
\texttt{pept} & \textbf{3.21E+03} & 3.50E+03 \\
\texttt{2kxa} & \textbf{3.04E+03} & 1.25E+04 \\
\texttt{res\_2kxa} & \textbf{3.42E+03} & 9.81E+03 \\
\texttt{C0030pkl} & \textbf{0} & 1.92E+04 \\
\texttt{cassioli} & 2.37E+04 & \textbf{7.73E+03} \\
\texttt{100d} & \textbf{3.16E+04} & 4.36E+04 \\
\texttt{hlx\_amb} & 2.04E+04 & \textbf{1.97E+04} \\
\texttt{water} & \textbf{5.73E+04} & 1.10E+05 \\
\texttt{3al1} & \textbf{2.56E+04} & 1.22E+05 \\
\texttt{1hpv} & \textbf{2.71E+05} & 3.03E+05 \\
\texttt{il2} & 7.76E+05 & \textbf{1.46E+05} \\
\texttt{1tii} & 2.33E+06 & \textbf{1.23E+06} \\
\hline
arithmean & 2.37E+05 & 1.36E+05 \\
geomean & 5.96E+02 & 1.86E+04 \\
best  & 53.33\% & 46.67\% \\
\end{tabular}}%
\end{center}
}
\caption{\leorev{MS solution values of cycle formulation vs.~edge formulation (realizations in $K=3$ dimensions)}}
\label{t_solvals}
\end{table}

\leorev{The cycle formulation reports better local optima more often than the edge formulation (``best'' = 53.3\%), while the latter is more stable, on average, as its arithmetic mean is slightly smaller. However, since the solution values of the cycle formulation are sometimes much smaller than those of the edge formulation, the geometric mean of the former is about two orders of magnitude smaller than that of the latter.}

We observe that Eq.~\eqref{dgpnew} was the only formulation by which a global optimum was found (that of \texttt{C0030pkl}) \leorev{using MS. Overall, the results reported in Table~\ref{t_solvals} follow those of Table~\ref{t2}, except in the case of instance \texttt{hlx\_amb}.}

\leorev{We decided to ignore the Eulerian formulation in Table \ref{t_solvals}, as its the objective function values were often larger than those of the corresponding cycle formulation, despite the fact that the former is a relaxation of the latter. This apparent anomaly is due to the heuristic nature of the MS solution algorithm.}

All in all, we believe that our results show that cycle formulations are credible competitors w.r.t.~the well established edge-based formulations, especially when the CPU time is not an important performance measure (which is generally the case in the protein conformation application).

\section*{Acknowledgements}
While the seminal idea for considering DGPs over cycles dates from Saxe's {\bf NP}-hardness proof \cite{saxe79}, the ``cycle formulation'' concept occurred to us as one of the authors (LL) attended a talk by Matteo Gallet given at the Erwin Schr\"odinger Institute (ESI), Vienna, during the Geometric Rigidity workshop 2018. LL has received funding from the European Union's Horizon 2020 research and innovation programme under the Marie Sklodowska-Curie grant agreement n.~764759 ``MINOA'', and from the ANR PRCI project ``MultiBioStruct''. CL is grateful to the Brazilian research agencies FAPESP and CNPq for support. NM is grateful to the Brazilian research agencies COPPETEC Foundation and CNPq for support.

\bibliographystyle{plain+eid}
\bibliography{cycledgp2}

\clearpage
\appendix
\onecolumn

\end{document}